\documentclass[final,3p,times,square,sort&compress]{astyle}
\usepackage{txfonts}




\usepackage{amssymb}
\usepackage{CJKutf8} 
\usepackage{amsmath} 
\usepackage[colorlinks,  
            linkcolor=blue,
            anchorcolor=red,
            unicode={true},
            citecolor=green]{hyperref}
\newtheorem{thm}{Theorem}[section]
\newproof{proof}{Proof}


\begin{document}
\begin{frontmatter}

 
\title{The General Solutions of Linear ODE and Riccati Equation  by Integral Series\tnoteref{label1}
}
\tnotetext[label1]{Many thanks to Prof.Qiyan Shi's guidance.}

\author{Yimin Yan
}
\ead{yanyimin@foxmail.com}

\address{}

\begin{abstract}
This paper gives out the general solutions of variable coefficients Linear ODE and Riccati equation  by way of integral series $\mathcal{E }(X )$ and $\mathcal{F}(X )$. Such kinds  of integral series are the generalized form of exponential function, and keep the properties of  convergent and reversible.
\end{abstract}
\begin{keyword}

Linear ODE,Riccati equation,integral series, general solution,variable coefficients
\end{keyword}
\end{frontmatter}


 \section{Introduction}
It is a classical problem to solve
   the n-th   order Linear ODE :
\begin{equation}\label{LODE.1}
    {\frac {d^{n}}{d{x}^{n}}}u+a_{1}(x){\frac {d^{n-1}}{d{x}^{n-1}}}u+a_{2}(x){\frac {d^{n-2}}{d{x}^{n-2}}}u+\cdots+a_{n}(x) u=f(x)
\end{equation}
which is equivalent to

\begin{equation}\label{LODE}
    {\frac {d}{dx}} U=AU+F
\end{equation}
with
    \begin{equation}
      \left\{
       \begin{aligned}
            U&={ \left[ \begin {array}{cccc}
                {\frac {d^{n-1}}{d{x}^{n-1}}}u   &{\frac {d^{n-2}}{d{x}^{n-2}}}u   &\cdots &u \end {array} \right]}^T\\
            F&=\left[ \begin {array}{cccc} f \left( x \right) &0&\cdots&0\end {array} \right]^T\\
            A(x)&= \left[ \begin {array}{ccccc}
                   -a_{{1}}&-a_{{2}}&-a_{{3}}&\cdots&-a_{{n}}\\
                \noalign{\medskip}1&0&0&\cdots&0\\
                 \noalign{\medskip}0&1&0&\cdots&0\\
                \noalign{\medskip}\cdots  &\cdots&\cdots& \cdots&\cdots\\
                \noalign{\medskip}0&0&\cdots&1&0\end {array} \right]\\
       \end{aligned}
       \right.
  \end{equation}
\\
As we all known,
\begin{enumerate}
  \item {if $\big\{ a_{n}(x)\big\}$ are all constants,  Eq.(\ref{LODE.1}) could be  solved by method of eigenvalue ( Euler), or by  exponential function in matrix form
      \begin{equation*}
        U= e^{ A\cdot x}\cdot C+ e^{ A\cdot x}\cdot \int _{0}^{x}e^{ -A\cdot s}\cdot F \left( s \right) {ds}
      \end{equation*}
    \emph{  where C     is a $n\times 1 $ constant matrix .}
  }
  \item {if $\big\{ a_{n}(x)\big\}$ are some variable coefficients, such as some special functions \cite[P337,206]{Wang}

  \begin{equation*}\tag{Bessel Equation}
     \frac{d^2 y}{dx^2} +\frac{1}{x} \frac{dy}{dx}+\big(1-\frac{n^2}{x^2}\big)y=0
  \end{equation*}

  \begin{equation*}\tag{Legendre Equation}
    (1-x^2)\frac{d^2 y}{dx^2}-2x\frac{dy}{dx}+n(n+1)y=0
  \end{equation*}
   special function theory answers them.   }
\end{enumerate}
But when it comes to the general circumstances,
the existing methods meet difficulties  in dealing with  Eq.(\ref{LODE}) , because of the variable coefficients.
In order to overcome it,   two functions are invited :
\subsection{Definition}
    \begin{equation}\label{intro.e}
      \left\{
       \begin{aligned}
            \mathcal{E}\big [X(x)\big ]=&I+\int _{0}^{x}\!X \left( t \right) {dt}+\int _{0}^{x}\!X \left( t
             \right) \int _{0}^{t}\!X \left( s \right) {ds}{dt} +\int _{0}^{x}\!X
             \left( t \right) \int _{0}^{t}\!X \left( s \right) \int _{0}^{s}\!X
             \left( \xi \right) {d\xi}{ds}{dt}+\cdots\\
           \mathcal{F}\big [X(x)\big ]=&I+\int _{0}^{x}\!X \left( t \right) {dt}+\int _{0}^{x}\! \int _{0}^{t}
            \!X \left( s \right) {ds} X \left( t \right) {dt} +\int _{0}^{x}\!
            \int _{0}^{t}\! \int _{0}^{s}\!X \left( \xi \right)
            {d\xi}\ X ( s ){ds}\ X \left( t \right) {dt}+\cdots\\
       \end{aligned}
       \right.
  \end{equation}
It will be  seen that such definition is reasonable and necessary. Clearly, when $X(x)$ and $\int _{0}^{x}\!X(t)dt$  are exchangeable, then
\begin{equation*}
    \mathcal{E}\big [X(x)\big ]=e^{\int _{0}^{x}\!X(t)dt}=\mathcal{F}\big [X(x)\big ]
\end{equation*}

 Besides, $\mathcal{E}( X )$ and $\mathcal{F}( X )$ extend some main properties of the exponential functions,  such as convergent , reversible and determinant (see Theorem \ref{th.property}). In addition,  a $n\times m$ matrix $A(x)=\big(a_{ij}(x)\big)_{nm}$
 is bounded and integral in [0,b] means that all its element $a_{ij}(x)$ are bounded and integral in [0,b].
\section{ Main Results }
\begin{thm}\label{th.ODE}
the general solution of the  Linear ODE (\ref{LODE}) is:
\begin{equation}
    U=\mathcal{E}\big [A(x)\big ]\cdot C+ \mathcal{E}\big [A(x)\big ]\cdot \int _{0}^{x} \mathcal{F}\big [-A( s)\big ]\cdot F \left( s \right) {ds}
\end{equation}
where C     is a $n\times 1 $ constant matrix .
\end{thm}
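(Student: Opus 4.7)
The plan is to mimic the classical constant-coefficient argument: first verify that $\mathcal{E}\bigl[A(x)\bigr]$ is a fundamental matrix for the homogeneous system $U'=AU$, then identify $\mathcal{F}\bigl[-A(x)\bigr]$ as its inverse, and finally apply variation of parameters. Termwise differentiation of the series in Eq.~(\ref{intro.e}) should, for each summand $\int_0^x A(t)\int_0^t A(s)\cdots\,ds\,dt$, strip off the outer integral and produce $A(x)$ times the preceding summand of $\mathcal{E}[A(x)]$; re-summing then yields $\frac{d}{dx}\mathcal{E}\bigl[A(x)\bigr]=A(x)\,\mathcal{E}\bigl[A(x)\bigr]$. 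The $\mathcal{F}$-series is deliberately structured so that the argument $x$ appears in the \emph{rightmost} factor, so the parallel computation gives $\frac{d}{dx}\mathcal{F}\bigl[-A(x)\bigr]=-\mathcal{F}\bigl[-A(x)\bigr]\,A(x)$.

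Next I would check that $\mathcal{F}[-A(x)]$ inverts $\mathcal{E}[A(x)]$. This is a one-line computation: the derivative of the product is $-\mathcal{F}[-A]\,A\,\mathcal{E}[A]+\mathcal{F}[-A]\,A\,\mathcal{E}[A]=0$, while both factors equal $I$ at $x=0$, so $\mathcal{F}\bigl[-A(x)\bigr]\,\mathcal{E}\bigl[A(x)\bigr]\equiv I$ on $[0,b]$. With the inverse in hand, I would substitute $U=\mathcal{E}\bigl[A(x)\bigr]\,V(x)$ into $U'=AU+F$; the first term cancels by Step~1, leaving $\mathcal{E}\bigl[A(x)\bigr]\,V'(x)=F(x)$, and left-multiplication by $\mathcal{F}[-A(x)]$ gives $V'(x)=\mathcal{F}\bigl[-A(x)\bigr]\,F(x)$. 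Integrating from $0$ and adding an arbitrary $n\times 1$ constant vector $C$ reproduces the stated formula; the fact that this exhausts all solutions follows from the standard existence/uniqueness theorem for linear systems.

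The main obstacle is the analytic justification of the termwise operations used above: termwise differentiation of $\mathcal{E}[A]$ and $\mathcal{F}[-A]$, and termwise multiplication when forming $\mathcal{F}[-A]\cdot\mathcal{E}[A]$. Both reduce to absolute and uniform convergence of the series and of their formal derivatives on any bounded interval $[0,b]$, which is precisely what the convergence and reversibility statements of Theorem~\ref{th.property} deliver. Once those estimates are invoked, everything after Step~1 is purely algebraic and runs exactly as in the constant-coefficient case, with the order of the factors carefully respected because $A(x)$ need not commute with its own integral.
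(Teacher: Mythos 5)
Your proposal is correct and follows essentially the same route as the paper: both rest on the derivative identities $\frac{d}{dx}\mathcal{E}[A]=A\,\mathcal{E}[A]$, $\frac{d}{dx}\mathcal{F}[-A]=-\mathcal{F}[-A]\,A$, and the inverse relation $\mathcal{F}[-A]\,\mathcal{E}[A]=\mathcal{E}[A]\,\mathcal{F}[-A]=I$ proved exactly by your "derivative is zero, value is $I$ at $x=0$" argument. The only cosmetic difference is that you derive the formula by the substitution $U=\mathcal{E}[A]\,V$ (variation of parameters) while the paper verifies the stated formula directly and appeals to invertibility of $\mathcal{E}[A]$ for generality; the underlying computation is identical.
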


\begin{thm}\label{th.Riccati}
For the bounded and integrable matrix , $A(x)=(a_{ij})_{nn} $, $B(x)=(b_{ij})_{mm} $,  $P(x)=(p_{ij})_{mn} $, $Q(x)=(q_{ij})_{nm} $, in [0,b],  the  general solution of  Riccati equation
\begin{equation}
    {\frac {d}{dx}}W+WPW+WB-AW-Q=0
\end{equation}
 is
 \begin{equation}
    W=W_{1} \cdot W^{-1}_{2}
\end{equation}
where
\begin{equation}
     \left[ \begin {array}{c} W_{{1}}\\ \noalign{\medskip}W_{{2}}\end {array} \right]
    =\mathcal{E}\biggl(\left[ \begin {array}{cc} A&Q\\ \noalign{\medskip}P&B\end {array}
 \right] \biggl)\cdot\left[ \begin {array}{c} W\mid_{x=0}\\ \noalign{\medskip}I\end {array} \right]
\end{equation}
or the other equivalent form:
 \begin{equation}
    W=U^{-1}_{2} \cdot U_{1}
\end{equation}
where
\begin{equation}
    \begin{array}{ll}
        \left[ \begin {array}{cc} U_{1}&U_{2}\end {array} \right]
        =\left[ \begin {array}{cc} I& W\mid_{x=0}\end {array} \right]\cdot \mathcal{F}\biggl(\left[ \begin {array}{cc} -B&P\\ \noalign{\medskip}Q&-A\end {array} \right]\biggl)
    \end{array}
\end{equation}

\end{thm}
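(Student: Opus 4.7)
The strategy is to linearise the Riccati equation by embedding $W$ as a ratio of two blocks of a solution to a larger linear system, then apply Theorem~\ref{th.ODE} to write that solution in closed form by means of $\mathcal{E}$ (and, for the dual representation, $\mathcal{F}$).

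\textbf{Step 1 (Linearisation).} I first pass to the block linear ODE
\[
\frac{d}{dx}\begin{pmatrix} W_{1}\\ W_{2}\end{pmatrix}=\begin{pmatrix} A & Q\\ P & B\end{pmatrix}\begin{pmatrix} W_{1}\\ W_{2}\end{pmatrix},
\]
which is the homogeneous case of Eq.~(\ref{LODE}) with block coefficient matrix $M:=\bigl(\begin{smallmatrix} A & Q\\ P & B\end{smallmatrix}\bigr)$. By Theorem~\ref{th.ODE} its general solution is $[W_{1}^{T},W_{2}^{T}]^{T}=\mathcal{E}(M)\,C$ for a constant column $C$. Selecting $C=[W|_{x=0}^{T},\,I]^{T}$ enforces $W_{1}(0)=W|_{x=0}$ and $W_{2}(0)=I$, so $W_{2}$ is invertible near the origin.

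\textbf{Step 2 (Projecting onto the Riccati).} From $W_{1}'=AW_{1}+QW_{2}$ and $W_{2}'=PW_{1}+BW_{2}$, I will differentiate $W:=W_{1}W_{2}^{-1}$ using $(W_{2}^{-1})'=-W_{2}^{-1}W_{2}'W_{2}^{-1}$ to obtain
\[
W'=AW+Q-WPW-WB,
\]
which rearranges to the claimed Riccati equation. For the converse, given any solution $W$ I will let $W_{2}$ solve the linear ODE $W_{2}'=(PW+B)W_{2}$ with $W_{2}(0)=I$ and set $W_{1}:=WW_{2}$; a short computation shows $[W_{1}^{T},W_{2}^{T}]^{T}$ satisfies the block linear system, so it must coincide with the formula above, establishing generality.

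\textbf{Step 3 (Dual form via $\mathcal{F}$).} From definition (\ref{intro.e}), $\Psi:=\mathcal{F}(N)$ satisfies the right-multiplicative equation $\Psi'=\Psi N$ with $\Psi(0)=I$. Applying this with $N=\bigl(\begin{smallmatrix} -B & P\\ Q & -A\end{smallmatrix}\bigr)$ and setting $[U_{1},U_{2}]:=[I,\,W|_{x=0}]\,\Psi$ gives $[U_{1}',U_{2}']=[U_{1},U_{2}]\,N$, i.e.\ $U_{1}'=-U_{1}B+U_{2}Q$ and $U_{2}'=U_{1}P-U_{2}A$. Differentiating $W:=U_{2}^{-1}U_{1}$ and substituting yields
\[
W'=-WPW+AW-WB+Q,
\]
the same Riccati equation, so both representations furnish the same parametric family of solutions.

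\textbf{Main obstacle.} The algebra in Steps~2--3 is short; the two genuinely technical points are (i) ensuring $W_{2}$ (resp.\ $U_{2}$) remains invertible on the interval under consideration, so that $W_{1}W_{2}^{-1}$ (resp.\ $U_{2}^{-1}U_{1}$) is defined throughout, and (ii) verifying that the block interpretation of $\mathcal{E}$ and $\mathcal{F}$ produces the block product rules needed for the componentwise equations $W_{i}'=\cdots$ and $U_{i}'=\cdots$. The first is the more delicate issue and in general restricts the explicit solution to a sub-interval of $[0,b]$; the second follows by expanding the defining nested-integral series of $\mathcal{E}$ and $\mathcal{F}$ term by term and collecting the block entries.
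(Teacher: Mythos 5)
Your plan follows essentially the same route as the paper: linearise the Riccati equation via the block system ${\frac{d}{dx}}\left[\begin{smallmatrix}W_1\\W_2\end{smallmatrix}\right]=\left[\begin{smallmatrix}A&Q\\P&B\end{smallmatrix}\right]\left[\begin{smallmatrix}W_1\\W_2\end{smallmatrix}\right]$ obtained from $W_2'=(PW+B)W_2$, $W_1=WW_2$ (the paper takes $W_2:=\mathcal{E}(PW+B)$), solve it with $\mathcal{E}$, and dually with $\mathcal{F}$ for the second representation. The only substantive differences are that the paper additionally proves the equivalence of the two forms directly, by showing ${\frac{d}{dx}}\bigl(U_2W_1-U_1W_2\bigr)=0$ with vanishing initial value, and gives a separate uniqueness argument, where you appeal to the converse construction; your flagged worry about invertibility of $W_2$ on all of $[0,b]$ is legitimate but is likewise left essentially unaddressed by the paper, which only invokes $\det\mathcal{E}(X)=e^{\int_0^x \mathrm{tr}\,X\,dt}\neq 0$ for bounded $PW+B$.
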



\section{Solutions of Linear ODE  }

\subsection{ Properties of $\mathcal{E}( X )$ and $\mathcal{F}( X )$}

From the  Definition(\ref{intro.e}), it holds that
    \begin{equation}
      \left\{
       \begin{aligned}
        {\frac {d}{dx}}\mathcal{E}\biggl[ X(x)\biggl]&=X\cdot \mathcal{E}\biggl[ X(x) \biggl]\\
        {\frac {d}{dx}}\mathcal{F}\biggl[ X(x)\biggl]&= \mathcal{F}\biggl[ X(x) \biggl]\cdot X\\
       \end{aligned}
       \right.
  \end{equation}

Now, we will see more explicit properties of $\mathcal{E}( X )$ and $\mathcal{F}( X )$.

\begin{thm}[Properties of $\mathcal{E}( X )$ and $\mathcal{F}( X )$]    \label{th.property}
If $X(x)$ is bounded and integrable, it holds that
\begin{enumerate}
  \item {$\mathcal{E}( X )$ and $\mathcal{F}( X )$ are convergent;
  }
  \item {
  \begin{equation} \label{det}
  \det\mathcal{E}( X ) = \det\mathcal{F}(X)= \det e^{\int _{0}^{x}\!X(t) {dt}}
       =e^{\int _{0}^{x}\!tr X(t) {dt}}=e^{tr \int _{0}^{x}\!X(t) {dt}}
    \end{equation}
   }
  \item {$\mathcal{E}( X )$ and $\mathcal{F}( X )$ are reversible, and
    \begin{equation}
        \mathcal{F}(X)\mathcal{E}(-X)=\mathcal{E}(-X)\mathcal{F}(X)=I
    \end{equation}
  }
\end{enumerate}
\end{thm}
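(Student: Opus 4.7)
The plan is to prove the three items in order, leaning on the first-order matrix identities $\frac{d}{dx}\mathcal{E}[X]=X\mathcal{E}[X]$ and $\frac{d}{dx}\mathcal{F}[X]=\mathcal{F}[X]X$ stated just above the theorem. For \emph{convergence}, I would note that since $X$ is bounded on $[0,b]$, say $\|X(t)\|\le M$, the $n$-th term of $\mathcal{E}[X]$ is an iterated integral over the simplex $\{0\le t_n\le\cdots\le t_1\le x\}$, whose volume is $x^n/n!$, so its norm is dominated by $M^n b^n/n!$. Summing yields the uniform bound $e^{Mb}$ on $[0,b]$, giving absolute and uniform convergence; an identical estimate handles $\mathcal{F}[X]$. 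Uniform convergence also legitimizes the term-by-term differentiation that produces the two ODE identities.

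For the \emph{determinant formula}, set $W(x)=\det \mathcal{E}[X]$. Since $Y=\mathcal{E}[X]$ satisfies $Y'=XY$ with $Y(0)=I$, the Abel--Liouville reduction gives $W'(x)=\mathrm{tr}\,X(x)\cdot W(x)$, $W(0)=1$, whose unique solution is $W(x)=\exp\int_0^x \mathrm{tr}\,X(t)\,dt$. Applying the same reasoning to $Y=\mathcal{F}[X]$ with $Y'=YX$ yields the same scalar ODE for $\det \mathcal{F}[X]$. The remaining equalities in (\ref{det}) follow by interchanging trace with integral and from the classical identity $\det e^{M}=e^{\mathrm{tr}\,M}$ applied to $M=\int_0^x X(t)\,dt$.

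For \emph{reversibility}, part 2 already shows that $\det\mathcal{E}[X]$ and $\det\mathcal{F}[X]$ are nowhere zero, so both matrices are invertible for every $x\in[0,b]$. To identify the inverses explicitly, set $G(x)=\mathcal{F}[X]\,\mathcal{E}[-X]$ and differentiate using the product rule together with the derivative identities (the second factor contributes $\mathcal{E}[-X]'=(-X)\mathcal{E}[-X]$):
\[
G'(x)=\mathcal{F}[X]\,X\,\mathcal{E}[-X]+\mathcal{F}[X]\,(-X)\,\mathcal{E}[-X]=0.
\]
Combined with $G(0)=I$ this forces $G\equiv I$ on $[0,b]$, and since $\mathcal{F}[X]$ is square and invertible, its left inverse is automatically its right inverse, delivering $\mathcal{E}[-X]\,\mathcal{F}[X]=I$ as well.

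The main obstacle I anticipate is a subtle circularity in step 2: the textbook formula $\frac{d}{dx}\det Y=\mathrm{tr}(Y^{-1}Y')\det Y$ presupposes invertibility of $Y$, which is precisely what part 3 is supposed to establish. The clean workaround is to begin instead from the adjugate form $\frac{d}{dx}\det Y=\mathrm{tr}(\mathrm{adj}(Y)\,Y')$, which holds without any invertibility hypothesis; using $Y'=XY$, the cyclic property of trace, and $Y\cdot\mathrm{adj}(Y)=(\det Y)I$, it collapses to $\mathrm{tr}(X)\det Y$ directly, and the rest of the argument proceeds without obstruction.
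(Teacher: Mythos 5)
Your proposal is correct and follows essentially the same route as the paper: term-by-term simplex-volume estimates for convergence, the Abel--Liouville scalar ODE $\frac{d}{dx}\det Y=\mathrm{tr}(X)\det Y$ for the determinant identity, and differentiation of $\mathcal{F}(X)\mathcal{E}(-X)$ combined with evaluation at $x=0$ for reversibility. Your adjugate-based derivation of the Liouville step is a cleaner, dimension-independent version of the paper's row-by-row determinant expansion (carried out there only for the $2\times 2$ case), and it properly addresses the circularity you flagged, which the paper's expansion also avoids implicitly.
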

\begin{proof}
\begin{enumerate}
  \item {Firstly, $\mathcal{E}(A)$ is convergent,since $\bigl \{a_{k}(x) \bigl \}^{n}_{k=1}$ are bounded in [0,b]:
\begin{center}
    $\exists M>0$, s.t. $|a_{k}(x)|< M$,\qquad  $\forall  x \in [0,b]$, \qquad $k=1,2,\cdots,n$
\end{center}
So
\begin{enumerate}
  \item {
  \begin{eqnarray*}
     \big \| \int _{0}^{x}\!A \left( t \right) {dt} \big \| =max \left| \int _{0}^{x}\!a_{{k}} \left( t \right) {dt} \right|<M |x|
  \end{eqnarray*}
  }
  \item {
  \begin{eqnarray*}
     &     \big  \| \int _{0}^{x}\!A \left( t \right) \int _{0}^{t}\!A \left( s \right) {ds}{dt} \big \|
      =max\left| \sum_{i}\int _{0}^{x}\!a_{{k}} \left( t \right) \int _{0}^{t}\!a_{{i}} \left( s \right) {ds}{dt} \right|
      <n M^2\left|   \int _{0}^{x}\! \int _{0}^{t}\! 1 {ds}{dt} \right|
 <\frac{n}{2!}(M|x|)^2
   \end{eqnarray*}
  }
  \item {
    \begin{eqnarray*}
     &&       \big \| \int _{0}^{x}\!A \left( t \right) \int _{0}^{t}\!A \left( s
         \right) \int _{0}^{s}\!A \left( \xi \right) {d\xi}{ds}{dt} \big \|
         =max \left| \sum_{i,j}\int _{0}^{x}\!a_{{k}} \left( t \right) \int _{0}^{t}\!a_{{i}}
         \left( s \right) \int _{0}^{s}\!a_{{j}} \left( \xi \right) {d\xi}{ds}
        {dt} \right|\\
        <&&n^2 M^3\left| \int _{0}^{x}\! \int _{0}^{t}\!\int _{0}^{s}\!{d\xi}{ds}{dt} \right|
        <\frac{n^2}{3!} (M|x|)^3
  \end{eqnarray*}

  }
  \item {$\cdots\cdots$}
\end{enumerate}
It follows that
\begin{eqnarray*}
        \|\mathcal{E}(A)\|&<&1+\frac{1}{n}\biggl[nM |x|+\frac{1}{2!}(nMx)^2+\frac{1}{3!} (nMx)^3+\cdots \biggl]
        = 1+\frac{1}{n}e^{nM|x|}
\end{eqnarray*}
 \\

Clearly, $\mathcal{E}(A)$ is convergent.  \\

  Similarly, $\mathcal{F}( X )$ is also convergent.
  }
  \item {

    \emph{$\forall n\times n$ matrix $A(x)$, if $tr A(x)$ is bounded and integral , then }
    \begin{equation}
        \det\mathcal{E}( A(x) )=e^{\int _{0}^{x}\!tr A(t) {dt}}=e^{tr \int _{0}^{x}\!A(t) {dt}}
    \end{equation}

which  is a special case of   Abel's formula\cite{Chen}:
\emph{If W and B are $ n\times n$ matrixes , s.t.}
    \begin{equation}\label{able}
        {\frac {d}{dx}}W=BW
    \end{equation}
then,
    \begin{equation}
        \det W=e^{tr B}
    \end{equation}\\

Here we just take $2\times 2$ matrix for   verification:\\
Let
$Y(x)=\mathcal{E}\biggl[A(x)\biggl]= \left[ \begin {array}{cc} y_{{1,1}}&y_{{1,2}}\\ \noalign{\medskip}y_{
{2,1}}&y_{{2,2}}\end {array} \right]$,  $A(x)=\left[ \begin {array}{cc} a_{{1,1}}&a_{{1,2}}\\ \noalign{\medskip}a_{
{2,1}}&a_{{2,2}}\end {array} \right]$\\
so  $ {\frac {d}{dx}}Y=A\cdot Y$means that
    \begin{equation}
        {\frac {d}{dx}}\left[ \begin {array}{cc} y_{{1,1}}&y_{{1,2}}\\ \noalign{\medskip}y_{
            {2,1}}&y_{{2,2}}\end {array} \right]
                = \left[ \begin {array}{cc} a_{{1,1}}&a_{{1,2}}\\ \noalign{\medskip}a_{
            {2,1}}&a_{{2,2}}\end {array} \right]\cdot   \left[ \begin {array}{cc} y_{{1,1}}&y_{{1,2}}\\ \noalign{\medskip}y_{
            {2,1}}&y_{{2,2}}\end {array} \right]
    \end{equation}
 it follows

    \begin{equation*}
       \begin{aligned}
        {\frac {d}{dx}}( \det Y)
        &= \det\left[ \begin {array}{cc}{\frac {d}{dx}} y_{{1,1}}&{\frac {d}{dx}}y_{{1,2}}\\
                    \noalign{\medskip}y_{{2,1}}&y_{{2,2}}\end {array} \right]
             +\det\left[ \begin {array}{cc} y_{{1,1}}&y_{{1,2}}\\ \noalign{\medskip}{\frac {d}{dx}}y_{{2,1}}&{\frac {d}{dx}}y_{{2,2}}\end {array} \right] \\
       & = det\left[ \begin {array}{cc}{ a_{1,1} y_{1,1}+a_{1,2}y_{2,1}}&{a_{1,1} y_{1,2}+a_{1,2}y_{2,2}}\\ \noalign{\medskip}y_{{2,1}}&y_{{2,2}}\end {array} \right]
                +\det\left[ \begin {array}{cc} y_{{1,1}}&y_{{1,2}}\\ \noalign{\medskip}  a_{2,1} y_{1,1}+a_{2,2}y_{2,1}& a_{2,1} y_{1,2}+a_{2,2}y_{2,2} \end {array} \right]\\
       & =
                a_{1,1}\det\left[ \begin {array}{cc} y_{{1,1}}&y_{{1,2}}\\ \noalign{\medskip}y_{{2,1}}&y_{{2,2}}\end {array} \right]+
                a_{2,2}\det\left[ \begin {array}{cc} y_{{1,1}}&y_{{1,2}}\\ \noalign{\medskip}y_{{2,1}}&y_{{2,2}}\end {array} \right]\\
        & =\biggl[a_{1,1}+ a_{2,2}\biggl]det Y
                =tr A\cdot det Y
       \end{aligned}
   \end{equation*}

Thus, Abel's formula holds and $\mathcal{E}( A(x) )$ is reversible. \\

  By the times:
    \begin{equation}
        \det\mathcal{F}(X)=e^{\int _{0}^{x}\!tr X(t) {dt}}=e^{tr \int _{0}^{x}\!X(t) {dt}}
    \end{equation}
     so, all we need to proof is
        \begin{equation}
             \det e^{\int _{0}^{x}\!X(t) {dt}}=e^{\int _{0}^{x}\!tr X(t) {dt}}
        \end{equation}
        Because   $e^{\int _{0}^{x}\!X(t) {dt}}$ no longer satisfies Abel's formula (one reason is $X $ and  $\int _{0}^{x}\!X(t) {dt}$  are unnecessarily exchangeable ) , we seek the other approach:\\

         $\forall n\times n$ matrix A, $\exists n\times n$ reversible matrix P , s.t.
    \begin{center}
     $P^{-1} A P=diag\{J_{1},J_{2},\cdots,J_{s} \}:=J$
    \end{center}
   \emph{ J is  A's Jordan matrix, $J_{i}$ is the Jordan block with eigenvalue $\lambda_{i}(x)$.}\\
   It follows that
   \begin{equation}
        e^{J_{i}}=e^{\lambda_{i}(x)}
            \left[ \begin {array}{cccccc} 1&1&\frac{1}{2!}&\frac{1}{3!}&\cdots&\cdots\\
             \noalign{\medskip}0&1&1&\frac{1}{2!}&\cdots&\cdots\\
             \noalign{\medskip}0&0&1&1&\cdots&\cdots\\
              \noalign{\medskip}\cdots&\cdots&\cdots&\cdots&\cdots&\cdots\\
              \noalign{\medskip}0&0&0&0&\cdots&1\end {array} \right]
   \end{equation}
  So,
  \begin{center}
    $P^{-1}e^{A}P=e^{P^{-1}AP}=e^{J}=diag\{e^{J_{1}},e^{J_{2}},\cdots,e^{J_{s}} \}$
  \end{center}
  Therefore
  \begin{center}
    $ \det e^{A}=\det e^{J}= e^{tr J}=e^{tr A}$
  \end{center}
  which yields
  \begin{center}
    $\det e^{\int _{0}^{x}\!X(t) {dt}}=e^{\int _{0}^{x}\!tr X(t) {dt}}$
  \end{center}
   }
  \item {
  \emph{Notice that $\forall n\times n$ matrix A, there exists a companion matrix $A^{*}$,s.t.
  \begin{equation}       \label{det.com}
    A\cdot A^{*}=A^{*}\cdot A=\texttt{det}A \cdot I
  \end{equation}
  so, if  $\det A\neq 0$ , A is invertible.\\}

Therefore, $\mathcal{E}( X )$ and $\mathcal{F}( X )$ are invertible.\\
Furthermore, it holds that
\begin{equation}  \label{ef}
    \mathcal{F}(X)\mathcal{E}(-X)=\mathcal{E}(-X)\mathcal{F}(X)=I
\end{equation}
Because:
\begin{enumerate}
  \item {
  	\begin{equation*}
       \begin{aligned}
        	&{\frac {d}{dx}} \biggl[\mathcal{F}(X)\mathcal{E}(-X)\biggl]
        ={\frac {d}{dx}}\mathcal{F}(X)\cdot\mathcal{E}(-X)
                        +\mathcal{F}(X)\cdot{\frac {d}{dx}}\mathcal{E}(-X)
        =  \mathcal{F}(X)X\cdot\mathcal{E}(-X)-\mathcal{F}(X)\cdot X\mathcal{E}(-X)
                    = 0
       \end{aligned}
   \end{equation*}

    So,
    \begin{equation*}
       \begin{aligned}
        	\mathcal{F}(X)\mathcal{E}(-X)&=const.
         =\big [\mathcal{F}(X)\mathcal{E}(-X)\big ]\big |_{x=0}
         =I
       \end{aligned}
   \end{equation*}

  }
  \item {Due to the special property(\ref{det.com}) of matrix, Eq.(\ref{ef}) is obtained. }
\end{enumerate}
  }
\end{enumerate}
\end{proof}

\subsection{ Proof of Theorem\ref{th.ODE} }

\begin{proof}
  According to  Definition(\ref{intro.e}) and Theorem \ref{th.property}  , it follows
    \begin{equation}\label{pf.ODE}
      \left\{
       \begin{aligned}
        {\frac {d}{dx}}\mathcal{E}\big [A(x)\big ] &=& A(x)\cdot\mathcal{E}\big [A(x)\big ]\\
        {\frac {d}{dx}}G(x) &=& A(x)\cdot G(x) + F\\
       \end{aligned}
       \right.
  \end{equation}
\emph{where }$$G(x)=\mathcal{E}\big [A(x)\big ]\cdot \int _{0}^{x} \mathcal{F}\big [-A( s)\big ]\cdot F \left( s \right) {ds}$$  because
	\begin{equation*}
       \begin{aligned}
            {\frac {d}{dx}}G(x)
           = & {\frac {d}{dx}}\mathcal{E}\big [A(x)\big ]\cdot \int _{0}^{x} \mathcal{F}\big [-A( s)\big ]\cdot F \left( s \right) {ds}
              +\mathcal{E}\big [A(x)\big ]\cdot   \mathcal{F}\big [-A(x)\big ]\cdot F(x) \\
           =& A(x)\cdot \mathcal{E}\big [A(x)\big ]\cdot \int _{0}^{x} \mathcal{F}\big [-A( s)\big ]\cdot F \left( s \right) {ds}+ F
       \end{aligned}
   \end{equation*}

Clearly   $U(x)=\mathcal{E}\big [A(x)\big ]\cdot C+ \mathcal{E}\big [A(x)\big ]\cdot \int _{0}^{x} \mathcal{F}\big [-A( s)\big ]\cdot F \left( s \right) {ds}$ is   convergent.\\

Moreover,   since $\mathcal{E}(A)$ is reversible, $ U(x)$ is the general solution of Eq.(\ref{LODE}).

\end{proof}

\begin{thm}
Assume that $A(x)=(a_{ij})_{n\times n}$, $B(x)=(b_{ij})_{m\times m}$, $P(x)=(p_{ij})_{n\times m}$  are bounded and integrable matrixes , and $U(x)$ is  the desired  $n\times m$ matrix.  The  Linear ODE :
\begin{equation}    \label{general.ODE}
    {\frac {d}{dx}}U=A(x)U+UB(x)+P(x)
\end{equation}
has general solutions
\begin{equation}
    U(x)=\mathcal{E}(A)\biggl[ \int _{0}^{x}\! \mathcal{F}\big(-A(t)\big)P(t) \mathcal{E}\big(-B(t)\big) {dt}  +C\biggl]\mathcal{F}(B)
\end{equation}
\emph{where C is $n\times m$ constant matrix.}
\end{thm}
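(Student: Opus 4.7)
The plan is to mimic the variation of parameters argument used in the proof of Theorem \ref{th.ODE}, but carefully respecting the noncommutativity forced by the presence of both a left action by $A(x)$ and a right action by $B(x)$. The key identities we will use are the ones already established in this section:
$\tfrac{d}{dx}\mathcal{E}[A(x)] = A(x)\,\mathcal{E}[A(x)]$, $\tfrac{d}{dx}\mathcal{F}[B(x)] = \mathcal{F}[B(x)]\,B(x)$, and the inversion relation $\mathcal{F}(X)\mathcal{E}(-X)=\mathcal{E}(-X)\mathcal{F}(X)=I$, from which we read off $\mathcal{E}(A)^{-1}=\mathcal{F}(-A)$ and $\mathcal{F}(B)^{-1}=\mathcal{E}(-B)$.

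First I would treat the homogeneous equation $\tfrac{d}{dx}U=AU+UB$. The natural candidate is $U_h(x)=\mathcal{E}(A)\,C\,\mathcal{F}(B)$ with $C$ an arbitrary $n\times m$ constant matrix. Differentiating with the product rule and substituting the two derivative formulas yields $\tfrac{d}{dx}U_h=A\,\mathcal{E}(A)\,C\,\mathcal{F}(B)+\mathcal{E}(A)\,C\,\mathcal{F}(B)\,B=AU_h+U_hB$, so every such $U_h$ solves the homogeneous system. Because $\mathcal{E}(A)$ and $\mathcal{F}(B)$ are invertible, the map $C\mapsto U_h$ is a bijection onto the solution space of the homogeneous problem, so this already gives the full homogeneous family.

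For the inhomogeneous equation I would make the variation-of-parameters ansatz $U(x)=\mathcal{E}(A)\,K(x)\,\mathcal{F}(B)$ where $K$ is a matrix-valued function to be determined. Differentiating gives
\begin{equation*}
\tfrac{d}{dx}U \;=\; A\,\mathcal{E}(A)\,K\,\mathcal{F}(B)+\mathcal{E}(A)\,K'\,\mathcal{F}(B)+\mathcal{E}(A)\,K\,\mathcal{F}(B)\,B \;=\; AU+UB+\mathcal{E}(A)\,K'\,\mathcal{F}(B).
\end{equation*}
Matching against $AU+UB+P$ forces $\mathcal{E}(A)\,K'\,\mathcal{F}(B)=P$, and inverting on both sides via $\mathcal{E}(A)^{-1}=\mathcal{F}(-A)$ and $\mathcal{F}(B)^{-1}=\mathcal{E}(-B)$ yields $K'(x)=\mathcal{F}(-A(x))\,P(x)\,\mathcal{E}(-B(x))$. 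Integrating from $0$ to $x$ and absorbing the integration constant into $C$ produces exactly the formula in the statement.

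The only place where care is required is preserving the left--right ordering throughout (so that the $A$ factors always appear on the left and the $B$ factors always on the right); this is also why the integrand must be $\mathcal{F}(-A)P\,\mathcal{E}(-B)$ rather than $\mathcal{E}(-A)P\,\mathcal{F}(-B)$. I do not expect any real obstacle: convergence and termwise differentiability are inherited from Theorem \ref{th.property}, and generality of the solution follows from the invertibility of $\mathcal{E}(A)$ and $\mathcal{F}(B)$, which guarantees that any solution differs from the particular one by some $\mathcal{E}(A)\,C\,\mathcal{F}(B)$.
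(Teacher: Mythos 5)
Your proposal is correct and follows essentially the same route as the paper: the substitution $U=\mathcal{E}(A)\,K\,\mathcal{F}(B)$ (the paper calls it $W$), reduction to $\mathcal{E}(A)\,K'\,\mathcal{F}(B)=P$, and inversion via $\mathcal{F}(-A)$ and $\mathcal{E}(-B)$ to integrate. Your added remarks on the homogeneous family and on why invertibility yields generality are a slight elaboration of what the paper leaves implicit, but the core argument is identical.
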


\begin{proof}
Let $U=\mathcal{E}(A)\cdot W \cdot\mathcal{F}(B)$, then
\begin{equation}
    {\frac {d}{dx}}U=A(x)U+UB(x)+\mathcal{E}(A) {\frac {d}{dx}}W \cdot\mathcal{F}(B)
\end{equation}

So  Eq.({\ref{general.ODE}}) could be  reduced to
\begin{equation}
    \mathcal{E}(A) {\frac {d}{dx}}W \cdot\mathcal{F}(B)=P
\end{equation}
or,
\begin{equation}
    {\frac {d}{dx}}W =\mathcal{F}(-A)\cdot P\cdot\mathcal{E}(-B)
\end{equation}

It's obviously that
\begin{equation}
    W(x)= \int _{0}^{x}\! \mathcal{F}\big [-A(t)\big ]P(t)\cdot \mathcal{E}\big [-B(t)\big ] {dt}  +C
\end{equation}
\emph{C is $n \times m$ constant matrix .}
\end{proof}



\section{Solutions of  Riccati equation}
In  mathematical investigation of the dynamics of a system, the introduction of a nonlinearity always leads to some form of the Riccati equation \cite{SolutionOfRiccati}:
\begin{equation}
    {\frac {d}{dx}}y+a(x)y^2+b(x)y+c(x)=0
\end{equation}
But it is usually the case that not even one solution of the Riccati equation is known.
 In the  following text, we try to give out solutions of  Riccati equation in matrix form:
\begin{equation} \label{r.g}
    {\frac {d}{dx}}W+WPW+WB-AW-Q=0
\end{equation}
\emph{where $A(x)=(a_{ij})_{nn} $, $B(x)=(b_{ij})_{mm} $,  $P(x)=(p_{ij})_{mn} $, $Q(x)=(q_{ij})_{nm} $ }.

\subsection{Proof of Theorem.\ref{th.Riccati}}

\begin{proof}
\begin{enumerate}
  \item {Firstly , define\cite[Ch 0.1.4]{Riccati}
    \begin{equation} \label{r.d1}
          W_{2}:=\mathcal{E}(PW+B)
    \end{equation}
    so $W_{2}$ is reversible, if $PW + B $ is bounded;\\
     meanwhile,
    \begin{equation}    \label{r.r}
        {\frac {d}{dx}} W_{2}= (PW+B)W_{2}
    \end{equation}

    Secondly, let $W_{1}:=WW_{2}$, so
    \begin{equation}
    \begin{array}{ll}
        {\frac {d}{dx}} W_{1}&={\frac {d}{dx}}W\cdot W_{2}+W\cdot {\frac {d}{dx}}W_{2}
        ={\frac {d}{dx}}W\cdot W_{2}+W\cdot \biggl[PW+B\biggl]W_{2}
        =\biggl[{\frac {d}{dx}}W+WPW+WB\biggl]W_{2}
    \end{array}
    \end{equation}

    so, with Eq.(\ref{r.g}) and Definition (\ref{r.d1}), it holds
    \begin{equation}\label{r.w2}
          {\frac {d}{dx}} W_{1}=AW_{1}+QW_{2}
    \end{equation}\\

    Take the relationship (\ref{r.r}) and (\ref{r.w2}) into consideration,
    \begin{equation}    \label{r.01}
         {\frac {d}{dx}}  \left[ \begin {array}{c} W_{{1}}\\ \noalign{\medskip}W_{{2}}\end {array} \right]
    =  \left[ \begin {array}{cc} A&Q\\ \noalign{\medskip}P&B\end {array}
    \right]  \cdot  \left[ \begin {array}{c} W_{{1}}\\ \noalign{\medskip}W_{{2}}\end {array} \right]
    \end{equation}
    we can solve $W_{1}$ and $W_{2}$.\\

    On the other hand, according to Definition (\ref{r.d1}) , it's obviously that
    \begin{equation}
            W_{2}|_{x=0}=\mathcal{E}(PW+B)|_{x=0}=I
    \end{equation}
    so it goes without saying that
    \begin{equation}
           W_{1}|_{x=0}=(W W_{2})|_{x=0}=W  |_{x=0}
    \end{equation}\\

    We immediately obtain
    \begin{equation}\label{r.uniquessEstimate}
     \left[ \begin {array}{c} W_{{1}}\\ \noalign{\medskip}W_{{2}}\end {array} \right]
    =\mathcal{E}\biggl(\left[ \begin {array}{cc} A&Q\\ \noalign{\medskip}P&B\end {array}
 \right] \biggl)\cdot\left[ \begin {array}{c} W\mid_{x=0}\\ \noalign{\medskip}I\end {array} \right]
    \end{equation}

    Therefore $W=W_{1} \cdot W^{-1}_{2}$  is the solution of Eq.(\ref{r.g}).
  }
  \item { Similarly, we can get
\begin{equation}        \label{r.02}
    \begin{array}{ll}
        {\frac {d}{dx}} \left[ \begin {array}{cc} U_{1}&U_{2}\end {array} \right]
        =\left[ \begin {array}{cc} I& W\mid_{x=0}\end {array} \right]\cdot  \left[ \begin {array}{cc} -B&P\\ \noalign{\medskip}Q&-A\end {array} \right]
    \end{array}
\end{equation}
  so,  $W=U^{-1}_{2} \cdot U_{1}$ is also the solution of Eq.(\ref{r.g}).
  }
  \item { But the two solutions are equivalence! That is,
  \begin{equation}
     W_{1} \cdot W^{-1}_{2}=U^{-1}_{2}U_{1}
   \end{equation}
   or
   \begin{equation}
     U_{2} \cdot W_{1}-U_{1}\cdot W_{2}=0
   \end{equation}

   Because, according to Eq.(\ref{r.01}) and Eq.(\ref{r.02})
\begin{equation}        \label{r.2}
    \begin{array}{ll}
        &{\frac {d}{dx}} \biggl[ U_{2} \cdot W_{1}-U_{1}\cdot W_{2}\biggl]
        = {\frac {d}{dx}} U_{2} \cdot W_{1}+U_{2} \cdot {\frac {d}{dx}}W_{1}
         -{\frac {d}{dx}}U_{1}\cdot W_{2} - U_{1}\cdot {\frac {d}{dx}} W_{2}\\
        =& \biggl[ U_{1} P-U_{2}A\biggl] \cdot W_{1}
            + U_{2} \cdot \biggl[ A W_{1}+ Q W_{2}\biggl]
            -\biggl[ U_{2} Q-U_{1}B\biggl]\cdot W_{2}
             -U_{1}\cdot \biggl[ P W_{1}+B W_{2}\biggl]
         =0
    \end{array}
\end{equation}\\

As a result,
   \begin{equation}
       \begin{aligned}
    	  U_{2} \cdot W_{1}-U_{1}\cdot W_{2}
            = const.
            = \big [U_{2} \cdot W_{1}-U_{1}\cdot W_{2}\big ]\big |_{x=0}=0
       \end{aligned}
   \end{equation}

    which implied that two solutions are equivalence.
}
\item{Uniqueness.
If Eq.(\ref{r.g}) has  more than one solution,such as  $X(x), Y(x)$,  under the same initial condition,i.e. $X(0)= Y(0)$.   Let
$W(x)=X(x)-Y(x)   $. So it is  clear that what we need to prove is equitant to show
    \begin{equation}\label{R.U.eq}
      \left\{
       \begin{aligned}
             &{\frac {d}{dx}}W+WPW+WB-AW =0\\
             &W|_{x=0}=0
       \end{aligned}
       \right.
   \end{equation}
has uniqueness solution $W(x)=0$.\\

Take advantage the proof steps we have established: according to step(\ref{r.uniquessEstimate}) and (\ref{r.w2}),  \begin{verse}  \itshape         
    Any solution of Eq.(\ref{R.U.eq}), such as $W(x)$, it is reasonable to define
    \begin{equation*}
         W_{2} =\mathcal{E}(PW+B),\qquad\quad  W_{1}=W \cdot W_{2}
    \end{equation*}

    It follows that   $W_{2}$ is bounded ,
        \begin{equation}
              {\frac {d}{dx}} W_{1}=A W_{1}
        \end{equation}
    and
     \begin{equation}
        W_{1} =\mathcal{E} [A  ] \cdot  W_{1}|_{x=0}= \mathcal{E} [A ] \cdot W|_{x=0}=0
     \end{equation}
    Therefore, $W(x)=W_{1}\cdot W_{2}^{-1}=0$
\end{verse}

}

\end{enumerate}
\end{proof}


\subsection{Simplify solutions of Riccati equation by particular solution}
In the research of Riccati equation, particular solution plays crucial important role. Too much of works have been done.
The first important result in the analysis of the Riccati equation is that if one solution is known then a whole family of solutions can be found \cite{SolutionOfRiccati}.
\begin{thm}
The same conditions as theorem \ref{th.Riccati}, Riccati equation
\begin{equation}  \label{r.ps}
    {\frac {d}{dx}}W+WPW+WB-AW-Q=0
\end{equation}
 has the unique  solution
    \begin{equation}
       \begin{aligned}
	 W =Y+\mathcal{E}\big (A-YP\big )\cdot\big( W|_{x=0}\big)
                         \cdot\biggl[I+\int _{0}^{x}\!R(t) \left( t \right) {dt}\cdot ( W\mid_{x=0})\biggl]^{-1}\cdot
                          \mathcal{F}\big (-[B+PY]\big )
       \end{aligned}
   \end{equation}
            where
                \begin{equation}
                  \left\{
                   \begin{aligned}
                          Y& \hbox{ is solution of Eq.(\ref{r.ps}) when } W|_{x=0}=0,
                           \hbox{i.e. } Y|_{x=0}=0\\
                          R&:=\mathcal{F}\big (-[B+PY]\big ) \cdot P\cdot\mathcal{E}\big (A-YP\big )\\
                   \end{aligned}
                   \right.
              \end{equation}

\end{thm}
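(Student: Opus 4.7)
My plan is to reduce the inhomogeneous Riccati equation to a homogeneous one (with $Q\equiv 0$) by the classical shift $W = Y + Z$, and then to apply Theorem~\ref{th.Riccati} to the reduced equation. First, I would substitute $W = Y + Z$ into Eq.(\ref{r.ps}); the terms involving only $Y$ cancel because $Y$ itself satisfies Eq.(\ref{r.ps}), leaving
\begin{equation*}
    \frac{d}{dx}Z + ZPZ + Z(B + PY) - (A - YP)Z = 0,
\end{equation*}
with the initial data $Z|_{x=0} = W|_{x=0}$, since $Y|_{x=0}=0$ by hypothesis. This is a Riccati equation with new coefficient matrices $\widetilde{A}=A-YP$, $\widetilde{B}=B+PY$, $\widetilde{P}=P$, and $\widetilde{Q}=0$.

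Next, I would apply the representation from Theorem~\ref{th.Riccati} to this reduced equation. Defining $Z_2 := \mathcal{E}(PZ+\widetilde{B})$ and $Z_1 := ZZ_2$, the system (\ref{r.01}) becomes block-triangular because $\widetilde{Q}=0$, so the two components decouple. The upper block gives $\frac{d}{dx}Z_1 = \widetilde{A}\,Z_1$ with $Z_1|_{x=0} = W|_{x=0}$, which integrates directly to $Z_1 = \mathcal{E}(A-YP)\cdot W|_{x=0}$. The lower block is then a linear equation $\frac{d}{dx}Z_2 = \widetilde{B}\,Z_2 + P\,Z_1$ driven by a known source. I would solve it via the ansatz $Z_2 = \mathcal{E}(\widetilde{B})\cdot H$; using $\frac{d}{dx}\mathcal{E}(\widetilde{B})=\widetilde{B}\,\mathcal{E}(\widetilde{B})$ together with the inversion identity $\mathcal{E}(\widetilde{B})^{-1}=\mathcal{F}(-\widetilde{B})$ from Theorem~\ref{th.property}, the equation collapses to $\frac{d}{dx}H = \mathcal{F}(-\widetilde{B})\,P\,\mathcal{E}(\widetilde{A})\cdot W|_{x=0} = R(x)\cdot W|_{x=0}$. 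Since $Z_2|_{x=0}=I$ forces $H|_{x=0}=I$, integration yields $H = I + \int_0^x R(t)\,dt\cdot W|_{x=0}$.

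Finally, I would assemble $Z = Z_1 Z_2^{-1} = \mathcal{E}(A-YP)\cdot W|_{x=0}\cdot H^{-1}\cdot \mathcal{F}(-[B+PY])$ and add $Y$ to recover the stated formula for $W$. Uniqueness is inherited from the uniqueness argument already carried out in the proof of Theorem~\ref{th.Riccati}, applied to the reduced equation for $Z$ with zero source. The main obstacle I expect is verifying invertibility of the bracketed factor $H = I + \int_0^x R(t)\,dt\cdot W|_{x=0}$, which is needed to form $Z_2^{-1}$ and hence $W$. Near $x=0$ this is automatic from $H|_{x=0}=I$ and continuity; globally it encodes the classical fact that Riccati solutions may fail to extend past the points where this factor becomes singular. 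Some care is also required throughout to preserve the ordering of matrix multiplication, since the various blocks do not commute, and it is precisely this non-commutativity that forces the use of $\mathcal{E}$ on one side and $\mathcal{F}$ on the other.
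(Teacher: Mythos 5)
Your argument is correct and lands on exactly the stated formula, but the middle of it is organized differently from the paper's proof, so a comparison is worthwhile. Both proofs start identically: shift by the particular solution, $W=Y+Z$, cancel the inhomogeneity using the fact that $Y$ solves Eq.~(\ref{r.ps}), and arrive at the reduced Riccati equation with coefficients $\widetilde A=A-YP$, $\widetilde B=B+PY$, $\widetilde P=P$, $\widetilde Q=0$. From there the paper conjugates first: it sets $V=\mathcal{E}(\widetilde A)\,U\,\mathcal{F}(-\widetilde B)$, so the linear terms drop out and $U$ satisfies the purely quadratic equation $\frac{d}{dx}U+URU=0$ with $R=\mathcal{F}(-\widetilde B)\,P\,\mathcal{E}(\widetilde A)$; it then applies Theorem~\ref{th.Riccati} with the block matrix $\left[\begin{smallmatrix}0&0\\ R&0\end{smallmatrix}\right]$, whose $\mathcal{E}$-series terminates after the first integral because the block is nilpotent, yielding $U=W|_{x=0}\cdot\bigl[I+\int_0^x R\,dt\cdot W|_{x=0}\bigr]^{-1}$ in one line. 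You instead run the linearization of Theorem~\ref{th.Riccati} on the reduced equation itself, note that $\widetilde Q=0$ makes the linear system block lower-triangular, integrate the top row with $\mathcal{E}(\widetilde A)$, and handle the bottom row by variation of parameters with integrating factor $\mathcal{E}(\widetilde B)$, using $\mathcal{E}(\widetilde B)^{-1}=\mathcal{F}(-\widetilde B)$ from Theorem~\ref{th.property}. The two computations are equivalent --- your $H$ is the paper's $W_2$ with the factor $\mathcal{E}(\widetilde B)$ pulled out, and the left/right placement of $\mathcal{E}$ and $\mathcal{F}$ is forced by non-commutativity in the same way --- so neither route is more general; the paper's conjugation makes the answer fall out of a terminating block exponential, while your triangular substitution avoids introducing the auxiliary unknown $U$ and the second change of variables altogether. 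Your closing uniqueness remark matches the paper's strategy (reduce to the homogeneous problem and reuse the uniqueness step of Theorem~\ref{th.Riccati}). One point where your write-up is actually more careful than the paper's: you flag invertibility of $I+\int_0^x R\,dt\cdot W|_{x=0}$ as a genuine hypothesis, automatic only near $x=0$; the paper waves it away by claiming that non-invertibility would force that factor to vanish identically, which does not follow --- its singularity at some $x$ is precisely the finite-time breakdown of the Riccati solution, so your caveat is the honest statement of where the formula holds.
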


\begin{proof}
\begin{enumerate}
  \item {According to Theorem \ref{th.Riccati} , Eq.(\ref{r.ps}) has solutions. Take any one of it, such as $Y$, and  let
       \begin{equation}\label{simpleStep1}
            V=W-Y
        \end{equation}
        It follows that
        \begin{equation}\label{simpleStep2}
        \begin{array}{ll}
               VPV&=(W-Y)P(W-Y)
               =\big (WPW-YPY\big ) -(W-Y)PY
                 -YP(W-Y)
                 =\big(WPW-YPY\big) -VPY-YPV\\
                  &
                  \stackrel{{Eq.(\ref{r.ps})} }{=}
                  \Big([-  {\frac {d}{dx}}W -WB+AW+Q]
                  -[-  {\frac {d}{dx}}Y -YB+AY+Q] \Big) -VPY-YPV\\
                  &= \Big(-  {\frac {d}{dx}}V +AV-VB   \Big) -VPY-YPV
                  =-  {\frac {d}{dx}}V +(A-YP)V-V(B+PY)
        \end{array}
        \end{equation}
        That is,
        \begin{equation}    \label{r.psolution.V}
            {\frac {d}{dx}}V +VPV+V(B+PY)-(A-YP)V=0
        \end{equation}
  }
  \item {Obviously, $\mathcal{E} \Big(A-YP \Big)$ and $\mathcal{F}\Big(-[B+PY]\Big)$ are reversible , we may let
      \begin{equation}\label{r.ps.transform}
            V=\mathcal{E}\Big(A-YP\Big)\cdot U \cdot \mathcal{F}\Big(-[B+PY]\Big)
       \end{equation}
      Now Eq.(\ref{r.psolution.V}) could be transformed into
      \begin{multline}
         \biggl[ \mathcal{E} \Big(A-YP \Big)\cdot {\frac {d}{dx}}U \cdot\mathcal{F} \Big(-[B+PY] \Big)
         +(A-YP)V              -V(B+PY) \biggl]              +VPV
             +V(B+PY)-(A-YP)V=0
      \end{multline}

     or,
     \begin{equation}
           {\frac {d}{dx}}U+U\biggl[\mathcal{F}\Big(-[B+PY]\Big) \cdot P\cdot\mathcal{E}\Big(A-YP\Big)\cdot  \biggl]U=0
       \end{equation}
      }
  \item {Let
        \begin{equation}
           R:=\mathcal{F}\Big(-[B+PY]\Big) \cdot P\cdot\mathcal{E}\Big(A-YP\Big)
       \end{equation}
       According to Theorem \ref{th.Riccati}, $U$ has solution
       \begin{equation}
           U= W_{1}\cdot W^{-1}_{2}
       \end{equation}
       where
    \begin{equation}
       \begin{aligned}
         \left[ \begin {array}{c} W_{{1}}\\ \noalign{\medskip}W_{{2}}\end {array} \right]
         =\mathcal{E}\biggl(\left[ \begin {array}{cc} 0&0\\ \noalign{\medskip}R&0\end {array}
        \right] \biggl)\cdot\left[ \begin {array}{c} U\mid_{x=0}\\ \noalign{\medskip}I\end {array} \right]
        = \biggl(I + \int _{0}^{x}\! { \left[ \begin {array}{cc} 0&0\\ \noalign{\medskip}R&0\end {array}
        \right] {dt}\biggl)}  \cdot\left[ \begin {array}{c} U\mid_{x=0}\\ \noalign{\medskip}I\end {array} \right]
        =\left[ \begin {array}{c} U\mid_{x=0}\\ \noalign{\medskip}   I+\int _{0}^{x}\!R(t) \left( t \right) {dt}
                \cdot U\mid_{x=0}
        \end {array} \right]
       \end{aligned}
   \end{equation}

    Now, Let's consider how to choose Y , so that both $W$ and $U\mid_{x=0}$ are as simple as possible. It's clear that
    \begin{center}
        when $Y|_{x=0}=0$,   $U|_{x=0}=Y|_{x=0}=W|_{x=0}$
    \end{center}
    In this case,
    \begin{equation}
           U= W|_{x=0}\cdot \biggl[I+\int _{0}^{x}\!R(t) \left( t \right) {dt}\cdot ( W\mid_{x=0})\biggl]^{-1}
     \end{equation}\\\\
    It should be noticed that $\biggl[I+\int _{0}^{x}\!R(t) \left( t \right) {dt}\cdot ( W\mid_{x=0})\biggl]$  is reversible, otherwise
    \begin{equation}
           I+\int _{0}^{x}\!R(t) \left( t \right) {dt}\cdot ( W\mid_{x=0})\equiv 0
     \end{equation}
    which is clearly impossible.\\
  }
  \end{enumerate}

 According to   transformation(\ref{r.ps.transform}), the solution of Eq.(\ref{r.ps}) is
    \begin{equation}
       \begin{aligned}
	 W  =Y  + V
        =Y  +\mathcal{E}\Big(A-YP\Big)\cdot W|_{x=0}
                         \cdot   \biggl[I+\int _{0}^{x}\!R(t) \left( t \right) {dt}
                         \cdot ( W\mid_{x=0})\biggl]^{-1}
                           \cdot \mathcal{F}\Big(-[B+PY]\Big)
       \end{aligned}
   \end{equation}
   \emph{ where $Y(x)\equiv 0$ , if and only if $Q(x)\equiv 0$.}

\end{proof}


\section{Acknowledgments}
Thanks Prof.Qiyan Shi's enthusiastic instruction and precious advice on
the thesis . The work is also supported by Prof.Youdong Zeng; thanks for
his many helpful discussions and suggestions on this paper. Besides, thanks
Prof.Guowei Chen for many valuable personal communications and guidance
concerning the school work.

\bibliographystyle{unsrt}


\begin{thebibliography}{} 

\bibitem[Wang]{Wang}
 Zhuxi Wang \& Dunren Guo,
 An Introduction to Special Functions,
 Peking University Press.


 \bibitem[Polyanin]{Riccati}
 Andrei D. Polyanin and Valentin F. Zaitsev,
 Handbook of Exact Solutions for Ordinary Differential Equations,
 Chapman and Hall/CRC,
 2nd Edition(0.1.4),
 2002.

 \bibitem[Chen]{Chen}
 Gongning Chen,
 The Theory and Application of Matrix,
 Science Press(Beijing),
 2007.

 \bibitem[Watkins]{SolutionOfRiccati}
 Thayer Watkins, Silicon Valley \& Tornado Alley,
 The Solution of the Riccati Equation,
 \href{http://www.applet-magic.com/riccati.htm}{applet-magic.com} .

\end{thebibliography}






\end{document}